\theoremstyle{plain}
\newtheorem{lem}{Lemma}
\newtheorem{prop}[lem]{Proposition}
\newtheorem{theo}[lem]{Theorem}
\theoremstyle{definition}
\newtheorem{defi}{Definition}
\theoremstyle{remark}
\def\bfd{\boldsymbol{d}}
\def\Pd{\mcP_{n,\bfd}}
\DeclareMathOperator{\dist}{dist}
\def\diff{\mathrm{D}}
\def\newton{\mathrm{N}}
\def\Zp{\bbZ_p}
\def\mcP{\mathcal{P}}
\def\bbF{\mathbb{F}}
\def\bbZ{\mathbb{Z}}
\def\bbI{\mathbb{I}}
\begin{document}

\title{Ultrametric Smale's $\alpha$-theory}

\author{Jazz G. Suchen\\
Institute for Interspecies Studies\\
Berlin, Germany\\
\url{jazz.g.suchen@outlook.com}
\and
Josué Tonelli-Cueto\\
Inria Paris \& IMJ-PRG\\
Paris, France\\
\url{josue.tonelli.cueto@bizkaia.eu}
}

\date{}

\maketitle

\begin{abstract}
We present a version of Smale's $\alpha$-theory for ultrametric fields, such as the $p$-adics and their extensions, which gives us a multivariate version of Hensel's lemma.
\end{abstract}

Hensel's lemma~\cite[\S3.4]{gouveabook} gives us sufficient condition for lifting roots mod $p^k$ to roots in $\Zp$. Alternatevely, Hensel's lemma gives us sufficient conditions for Newton's method convergence towards an approximate root. Unfortunately, in the multivariate setting, versions of Hensel's lemma are scarce~\cite{conradhensel}. However, in the real/complex world, Smale's $\alpha$-theory~\cite{dedieubook} gives us a clean sufficient criterion for deciding if Newton's method will converge quadratically. In the $p$-adic setting, Breiding~\cite{breiding2013} proved a version of the $\gamma$-theorem, but he didn't provide a full $\alpha$-theory. In this short communication, we provide an ultrametric version of Smale's $\alpha$-theory for square systems---initially presented as an appendix in~\cite{TC-Strassman}---, together with an easy proof.

In what follows, and for simplicity\footnote{We focus on characteristic zero and polynomials to avoid technical details related to Taylor series.}, $\bbF$ is a non-archimedian complete field of characteristic zero with (ultrametric) absolute value $|~|$ and $\Pd[n]$ the set of polynomial maps
$f:\bbF^n\rightarrow \bbF^n$
where $f_i$ is of degree $d_i$. In this setting, we will consider on $\bbF^n$ the ultranorm given by
$
\|x\|:=\max\{x_1,\ldots,x_n\},
$
its associated distance
$
\dist(x,y):=\|x-y\|,
$
and on $k$-multilinear maps $A:(\bbF^n)^k\rightarrow \bbF^q$ the induced ultranorm, which is given by 
\begin{equation}
    \|A\|:=\sup_{v_1,\ldots,v_k\neq 0}\frac{\|A(v_1,\ldots,v_k)\|}{\|v_1\|\cdots\|v_k\|}.
\end{equation}
In this context, we can define Smale's parameters as follows. Below $\diff_xf$ denotes the differential map of $f$ at $x$ and $\diff_x^kf$ the $k$-linear map induced by the $k$th order partial derivatives of $f$ at $x$.

\begin{defi}[Smale's parameters]
Let $f\in\Pd[n]$ and $x\in\bbF^n$. We define the following:
\begin{enumerate}
    \item[(a)] \emph{Smale's $\alpha$}: $\alpha(f,x):=\beta(f,x)\gamma(f,x)$, if $\diff_x f$ is non-singular, and $\alpha(f,x):=\infty$, otherwise.
    \item[(b)] \emph{Smale's $\beta$}: $\beta(f,x):=\|\diff_xf^{-1}f(x)\|$, if $\diff_x f$ is non-singular, and $\alpha(f,x):=\infty$, otherwise.
    \item[(c)] \emph{Smale's $\gamma$}: $\gamma(f,x):=\sup_{k\geq 2}\left\|\diff_xf^{-1}\frac{\diff_x^kf}{k!}\right\|^{\frac{1}{k-1}}$, if $\diff_x f$ is invertible, and $\gamma(f,x):=\infty$, otherwise.
\end{enumerate}
\end{defi}

If $\diff_xf$ is invertible, then the \emph{Newton operator},
\[\newton_f:x\mapsto x-\diff_xf^{-1}f(x),\]
is well-defined at $x$. For a point $x$, the \emph{Newton sequence} is the sequence $\{\newton_f^k(x)\}$. Note that this sequence is well-defined (i.e., $\newton_f^k(x)$ makes sense for all $k$) if and only if $\diff_{\newton_x^k(x)}f$ is invertible at every $k$. Also note that
\[
\beta(f,x)=\|x-\newton_f(x)\|,
\]
so $\beta$ measures the length of a Newton step.

\begin{theo}[Ultrametric $\alpha$/$\gamma$-theorem]\label{thm:Smalealphatheory}
Let $f\in\Pd[n]$ and $x\in\bbF^n$. Then the following are equivalent:
\[
\text{(}\alpha\text{) }\alpha(f,x)<1~\text{ and }~\text{(}\gamma\text{) } \dist(x,f^{-1}(0))<1/\gamma(f,x)
\]
Moreover, if any of the above equivalent conditions holds, then the Newton sequence, $\{\newton_f^k(x)\}$, is well-defined and it converges quadratically to a non-singular zero $\zeta$ of $f$. More specifically, for all $k$, the following holds:
\begin{center}
\emph{(a)} $\alpha(f,\newton_f^k(x))\leq \alpha(f,x)^{2^{k}}$.\hfill
\emph{(b)} $\beta(f,\newton_f^k(x))\leq \beta(f,x)\alpha(f,x)^{2^{k}-1}$.\hfill
\emph{(c)} $\gamma(f,\newton_f^k(x))\leq \gamma(f,x)$.

    \begin{enumerate*}
    \item[(Q)] $\|\newton_f^k(x)-\zeta\|=\beta(f,\newton_f^k(x))\leq \alpha(f,x)^{2^{k}-1}\beta(f,x)<\alpha(f,x)^{2^{k}-1}/\gamma(f,x)$.
\end{enumerate*} 
\end{center}
\end{theo}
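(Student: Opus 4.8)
The plan is to isolate a single \emph{one Newton step} estimate and then iterate it, the point being that over an ultrametric field a convergent (here even finite) series is dominated by its largest term and $|m|\le 1$, $\left|\binom{j}{m}\right|\le 1$ for all integers $j\ge m\ge 0$. Since each $f_i$ is a polynomial, I will work with the \emph{exact} Taylor expansions about a point $x$, namely $f(x+u)=\sum_{k\ge 0}\tfrac1{k!}\diff_x^kf(u^{\otimes k})$ and, for $m\ge 1$, $\diff_{x+u}^mf=\sum_{j\ge m}\tfrac1{(j-m)!}\diff_x^jf\big(u^{\otimes (j-m)},\,\cdot\,\big)$ (the remaining $m$ arguments left free). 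Applying $\diff_xf^{-1}$ and regrouping via $\tfrac1{m!(j-m)!}=\tfrac1{j!}\binom jm$ rewrites these in terms of the normalized tensors $\diff_xf^{-1}\tfrac{\diff_x^jf}{j!}$, whose operator norms are $\le\gamma(f,x)^{j-1}$ straight from the definition of $\gamma$.

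\textbf{One step.} Assume $\diff_xf$ non-singular and $\alpha(f,x)<1$; set $x'=\newton_f(x)$ and $u=x'-x$, so $\|u\|=\beta(f,x)$ and $\gamma(f,x)\|u\|=\alpha(f,x)<1$. First, expanding $\diff_{x'}f$ gives $\diff_xf^{-1}\diff_{x'}f=\uno+R$ with $\|R\|\le\max_{j\ge 2}\gamma(f,x)^{j-1}\|u\|^{j-1}=\alpha(f,x)<1$, so by the ultrametric geometric series $\diff_{x'}f$ is invertible and $\|\diff_{x'}f^{-1}\diff_xf\|\le 1$; in particular the Newton iteration does not break down. Next, since $\diff_xf(u)=-f(x)$, the expansion of $f$ collapses to $f(x')=\sum_{k\ge 2}\tfrac1{k!}\diff_x^kf(u^{\otimes k})$, whence $\|\diff_xf^{-1}f(x')\|\le\max_{k\ge 2}\gamma(f,x)^{k-1}\|u\|^{k}=\gamma(f,x)\beta(f,x)^2$ and therefore $\beta(f,x')=\|\diff_{x'}f^{-1}f(x')\|\le\gamma(f,x)\beta(f,x)^2$. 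Finally, pushing the expansion of $\diff_{x'}^mf$ through $\diff_xf^{-1}$ and using $\left|\binom jm\right|\le 1$ gives $\big\|\diff_{x'}f^{-1}\tfrac{\diff_{x'}^mf}{m!}\big\|\le\max_{j\ge m}\gamma(f,x)^{j-1}\|u\|^{j-m}=\gamma(f,x)^{m-1}$, i.e. $\gamma(f,x')\le\gamma(f,x)$. Combining the three estimates, $\alpha(f,x')\le\alpha(f,x)^2$.

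\textbf{Equivalence and convergence.} For $(\gamma)\Rightarrow(\alpha)$, pick $\zeta$ with $f(\zeta)=0$ and $\|x-\zeta\|<1/\gamma(f,x)$; the exact expansion of $f$ at $\zeta$ reads $0=\diff_xf^{-1}f(x)+(\zeta-x)+E$ with $\|E\|\le\gamma(f,x)\|\zeta-x\|^2<\|\zeta-x\|$, so the ultrametric equality case forces $\beta(f,x)=\|\diff_xf^{-1}f(x)\|=\|\zeta-x\|<1/\gamma(f,x)$, i.e. $\alpha(f,x)<1$. For $(\alpha)\Rightarrow(\gamma)$ and the quantitative conclusions, iterate \textbf{One step} starting at $x$: the sequence $\{\newton_f^k(x)\}$ is well-defined and $\gamma(f,\newton_f^k(x))\le\gamma(f,x)$, $\alpha(f,\newton_f^k(x))\le\alpha(f,x)^{2^k}$, $\beta(f,\newton_f^k(x))\le\beta(f,x)\prod_{j<k}\alpha(f,\newton_f^j(x))$, which by composing the $\alpha$-bounds yields (a)--(c). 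Since $\|\newton_f^{k+1}(x)-\newton_f^k(x)\|=\beta(f,\newton_f^k(x))\to 0$ and $\bbF$ is complete, the sequence converges to some $\zeta$; continuity of $f$ and $\diff f$, together with the uniform bounds on $\|\diff_{\newton_f^k(x)}f\|$ and $\|\diff_{\newton_f^k(x)}f^{-1}\|$ obtained by telescoping the factors $\uno+R$, forces $f(\zeta)=0$, while the convergent infinite product of the invertible operators $\uno+R$ shows $\diff_\zeta f$ is invertible; so $\zeta$ is a non-singular zero. Finally the numbers $\beta(f,\newton_f^k(x))$ strictly decrease, so the ultrametric inequality gives $\|\newton_f^k(x)-\zeta\|=\beta(f,\newton_f^k(x))$; the case $k=0$ gives $\dist(x,f^{-1}(0))\le\|x-\zeta\|=\beta(f,x)<1/\gamma(f,x)$, which is $(\gamma)$, and also furnishes (Q).

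\textbf{Main obstacle.} The only somewhat delicate ingredient is the monotonicity $\gamma(f,\newton_f(x))\le\gamma(f,x)$, which needs the Taylor expansion of the higher-order derivatives and the bookkeeping with binomial coefficients. Ultrametrically this is painless --- $\left|\binom jm\right|\le 1$ and the finite sum is dominated by its $j=m$ term --- so none of the convergence constants or $\psi$-functions of the archimedean $\alpha$-theory intervene; the ultrametric inequality does all the heavy lifting. Completeness of $\bbF$ is used exactly once, to produce the limit $\zeta$.
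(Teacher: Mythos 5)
Your proposal is correct and follows essentially the same route as the paper: a one-step estimate controlling $\alpha$, $\beta$, $\gamma$ along a Newton step (via Taylor expansion, the bound $\bigl|\binom{j}{m}\bigr|\le 1$, and the ultrametric ``largest term dominates'' principle), iterated to get (a)--(c) and quadratic convergence, together with the direct Taylor-at-$\zeta$ argument plus ultrametric equality for $(\gamma)\Rightarrow(\alpha)$. The only organizational difference is that the paper factors the one-step estimate through a general ``variation of Smale's parameters between two nearby points'' lemma (and records equalities such as $\|\diff_yf^{-1}\diff_xf\|=1$ and $\gamma(f,y)=\gamma(f,x)$), whereas you specialize immediately to $y=\newton_f(x)$ and only record the inequalities $\le$, which is all the theorem needs; your infinite-product argument for the invertibility of $\diff_\zeta f$ can also be shortcut by applying the one-step invertibility estimate directly at $y=\zeta$ once $\gamma(f,x)\|x-\zeta\|<1$ is known.
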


In the univariate $p$-adic setting, we have that for $f\in\Zp[X]$ and $x\in\Zp$, 
\[
\gamma(f,x)\leq 1/|f'(x)|,
\]
since $|1/k!f^{(k)}(x)|\leq 1$ and $|f'(x)|\leq 1$. Therefore we can see that the condition $|f(x)|<|f'(x)|^2$ of Hensel's lemma implies $\alpha(f,x)<1$ for a $p$-adic integer polynomial. In this way, we can see that Theorem~\ref{thm:Smalealphatheory} generalizes Hensel's lemma to the multivariate case.

Moreover, in the univariate setting, we can show the following proposition which gives a precise characterization of Smale's $\gamma$ in the ultrametric setting as the separation between `complex' roots---not only a bound as it happens in the complex/real setting.

\begin{prop}[Ultrametric separation theorem for $\gamma$]\cite[Theorem~3.15]{TC-Strassman}\label{prop:separation}
Fix an algebraic closure $\overline{\bbF}$ of $\bbF$ with the corresponding extension of the ultranorm. Let $f\in \bbF[X]$ and $\zeta\in \overline{\bbF}$ a simple root, then
\begin{equation*}\tag*{\qed}
   \frac{1}{\gamma(f,\zeta)}=\dist(\zeta,f^{-1}(0)\setminus \{\zeta\}).
\end{equation*}
\end{prop}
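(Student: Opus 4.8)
The plan is to factor $f$ over the algebraic closure, reduce the computation of $\gamma(f,\zeta)$ to the size of the coefficients of a single auxiliary polynomial, and then invoke a purely ultrametric lemma about elementary symmetric functions.

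First I would write $f = c\,(X-\zeta)\prod_{i=2}^{d}(X-\zeta_i)$ over $\overline{\bbF}$, where $d=\deg f$ and $\zeta_2,\dots,\zeta_d$ are the remaining roots listed with multiplicity; since $\zeta$ is simple we have $\zeta_i\neq\zeta$ for all $i\ge2$. Set $h(X):=f(X)/(X-\zeta)=c\prod_{i=2}^{d}(X-\zeta_i)$, so that $f'(\zeta)=h(\zeta)\neq0$. A one-line Leibniz computation on $f=(X-\zeta)h$ gives $f^{(k)}(\zeta)=k\,h^{(k-1)}(\zeta)$ for every $k\ge1$, hence $\frac{f^{(k)}(\zeta)}{k!\,f'(\zeta)}=\frac{h^{(k-1)}(\zeta)}{(k-1)!\,h(\zeta)}$, and therefore, since in the univariate case $\diff_\zeta f$ and $\diff_\zeta^{k}f$ are just the scalars $f'(\zeta)$ and $f^{(k)}(\zeta)$, reindexing $j=k-1$ we obtain
\[
\gamma(f,\zeta)=\sup_{j\ge1}\left|\frac{h^{(j)}(\zeta)}{j!\,h(\zeta)}\right|^{1/j}.
\]

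Next I would make these coefficients explicit. Expanding $h$ around $\zeta$ and dividing by $h(\zeta)$ gives
\[
\frac{h(\zeta+Y)}{h(\zeta)}=\prod_{i=2}^{d}\left(1+\frac{Y}{\zeta-\zeta_i}\right)=\sum_{j\ge0}e_j\!\left(\tfrac{1}{\zeta-\zeta_2},\dots,\tfrac{1}{\zeta-\zeta_d}\right)Y^{j},
\]
where $e_j$ denotes the $j$th elementary symmetric polynomial, so $\frac{h^{(j)}(\zeta)}{j!\,h(\zeta)}=e_j\big(\tfrac{1}{\zeta-\zeta_2},\dots,\tfrac{1}{\zeta-\zeta_d}\big)$. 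It then suffices to prove the ultrametric lemma: for any finite tuple $b=(b_2,\dots,b_d)$ over a field with an ultrametric absolute value, $\sup_{j\ge1}|e_j(b)|^{1/j}=\max_i|b_i|$. The inequality ``$\le$'' is immediate from the ultrametric inequality together with $\big|\prod_{i\in S}b_i\big|\le(\max_i|b_i|)^{|S|}$; for ``$\ge$'' I would fix $i_0$ with $|b_{i_0}|$ maximal and use that $b_{i_0}$ is a root of $\prod_i(T-b_i)=\sum_{j}(-1)^j e_j(b)\,T^{(d-1)-j}$, so isolating $b_{i_0}^{\,d-1}$ and applying $|\cdot|$ forces $|e_j(b)|\ge|b_{i_0}|^{j}$ for some $j\ge1$. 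Applying the lemma with $b_i=1/(\zeta-\zeta_i)$ then gives $\gamma(f,\zeta)=\max_{i\ge2}|\zeta-\zeta_i|^{-1}=\big(\min_{i\ge2}|\zeta-\zeta_i|\big)^{-1}=\dist\big(\zeta,f^{-1}(0)\setminus\{\zeta\}\big)^{-1}$, which is the claim (with the convention that for $\deg f=1$ the empty minimum is $+\infty$, consistently with $\gamma(f,\zeta)=0$ there).

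I expect the main obstacle to be the ``$\ge$'' direction of the ultrametric lemma: the naive choice $j=1$ fails because $e_1(b)=\sum_i b_i$ can be small through cancellation, so one must instead exploit an identity — here the vanishing of $\prod_i(T-b_i)$ at a dominant root — in which the strictly dominant term of an ultrametric sum is forced to survive. The remaining ingredients (the Leibniz identity, the Taylor/symmetric-function bookkeeping, and the ``$\le$'' bound) are routine.
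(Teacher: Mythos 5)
The paper states Proposition~\ref{prop:separation} with a citation to~\cite{TC-Strassman} and no in-text proof (the proposition carries an end-of-proof mark signalling that its proof lives in the cited reference), so there is no paper argument to compare against; your proposal must stand on its own, and it does. The reduction via $f=(X-\zeta)h$, the Leibniz identity $f^{(k)}(\zeta)=k\,h^{(k-1)}(\zeta)$, and the product expansion $h(\zeta+Y)/h(\zeta)=\prod_{i\ge2}\bigl(1+Y/(\zeta-\zeta_i)\bigr)$ correctly identify $\gamma(f,\zeta)$ with $\sup_{j\ge1}\bigl|e_j(b)\bigr|^{1/j}$ for $b_i=1/(\zeta-\zeta_i)$. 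The ultrametric lemma $\sup_{j\ge1}|e_j(b)|^{1/j}=\max_i|b_i|$ is the right isolation of what the nonarchimedean structure buys you: your proof of the ``$\ge$'' half, substituting the dominant $b_{i_0}$ into $\prod_i(T-b_i)=0$ and invoking that in a vanishing ultrametric sum no single term can strictly dominate, is precisely the step with no archimedean analogue, which is why one gets an equality here rather than the usual two-sided comparison between $\gamma$ and root separation. (Equivalently this is the Newton-polygon fact that the maximal slope of $\prod_i(T-b_i)$ recovers $\max_i|b_i|$.) The degenerate cases, the all-zero tuple and $\deg f=1$, are correctly disposed of by your stated conventions. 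I see no gap.
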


\section*{Proof of Theorem~\ref{thm:Smalealphatheory}}

The proof of the theorem relies in the following three lemmas, stated for $f\in\Pd[n]$ and $x,y\in\bbF^n$. 

\begin{lem}\label{lem:inversegamma}
If $\gamma(f,x)\|x-y\|<1$, then $\diff_y f$ is invertible and
$\|\diff_y f^{-1}\diff_xf\|=1$.
\end{lem}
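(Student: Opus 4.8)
\textbf{Proof plan for Lemma~\ref{lem:inversegamma}.} The plan is to expand $\diff_y f$ as an exact, \emph{finite} Taylor polynomial centred at $x$, divide it on the left by $\diff_x f$, and recognise the outcome as $\uno$ plus an operator of norm strictly less than $1$; over a complete ultrametric field such an operator is automatically invertible with inverse of norm exactly $1$, which yields both assertions at once. The only delicate point will be the Taylor bookkeeping --- the factorials, the contraction of the multilinear maps $\diff_x^k f$, and the fact that $|k|\le 1$ for integers $k$ --- so that the perturbation genuinely has norm $<1$; after that, the ultrametric geometry does everything for free, with none of the quantitative care required in the archimedean $\gamma$-theorem.

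First I would observe that the hypothesis forces $\gamma(f,x)<\infty$, so that $\diff_x f$ is invertible and $\diff_x f^{-1}$, $\gamma(f,x)$ and the Newton operator all make sense at $x$. Since each $f_i$ is a polynomial, Taylor's formula at $x$ is an identity of polynomial maps, $f(y)=\sum_{k\ge 0}\frac{\diff_x^k f}{k!}(y-x)^k$, the sum being finite; differentiating in $y$ gives, as linear maps, $\diff_y f=\sum_{k\ge 1}\frac{\diff_x^k f}{(k-1)!}(y-x)^{k-1}$, where the $k$-th term means $\diff_x^k f$ with $k-1$ of its arguments set equal to $y-x$. Composing on the left with $\diff_x f^{-1}$ yields $\diff_x f^{-1}\diff_y f=\uno+N$ with $N:=\sum_{k\ge 2}k\,\diff_x f^{-1}\frac{\diff_x^k f}{k!}(y-x)^{k-1}$, again a finite sum.

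Next I would bound $\|N\|$. Because $\bbF$ has characteristic zero, $|~|$ restricts to a non-archimedean absolute value on $\bbQ$, whence $|k|\le 1$ for every $k\in\bbZ$; combining this with the definition of $\gamma$, which gives $\|\diff_x f^{-1}\frac{\diff_x^k f}{k!}\|\le\gamma(f,x)^{k-1}$, and with the definition of the induced norm on multilinear maps, the $k$-th summand of $N$ has norm at most $\bigl(\gamma(f,x)\|x-y\|\bigr)^{k-1}$. The ultrametric inequality applied to the finite sum then gives $\|N\|\le\max_{k\ge 2}\bigl(\gamma(f,x)\|x-y\|\bigr)^{k-1}=\gamma(f,x)\|x-y\|<1$.

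Finally I would invoke the standard ultrametric fact that $\|N\|<1$ makes $\uno+N$ invertible, with inverse the convergent Neumann series $\sum_{j\ge 0}(-N)^j=\uno+N'$ where $\|N'\|\le\|N\|<1$; the ultrametric identities $\|\uno\|\le\max(\|\uno+N\|,\|N\|)$ and $\|\uno+N\|\le\max(\|\uno\|,\|N\|)$ then force $\|\uno+N\|=1$, and likewise $\|\uno+N'\|=1$. Since $\diff_x f$ is invertible and $\diff_x f^{-1}\diff_y f=\uno+N$ is invertible, $\diff_y f$ is invertible and $\diff_y f^{-1}\diff_x f=(\uno+N)^{-1}=\uno+N'$ has norm $1$, which is the claim.
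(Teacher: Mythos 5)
Your proposal is correct and follows essentially the same route as the paper's proof: expand $\diff_y f$ in a Taylor series at $x$, left-compose with $\diff_x f^{-1}$ to get $\uno$ plus a perturbation $N$, bound each term of $N$ by $(\gamma(f,x)\|x-y\|)^{k-1}<1$ (using $|k|\le 1$), and conclude via the Neumann series and the ultrametric equality case that $\diff_y f$ is invertible with $\|\diff_y f^{-1}\diff_x f\|=1$. The only cosmetic differences are that you index from $k\ge 2$ rather than shifting to $k+1$, and you make the finiteness of the Taylor sum and the bound $|k|\le 1$ explicit where the paper leaves them implicit.
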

\begin{lem}[Variations of Smale's parameters]\label{lem:variationSmaleparameters}
If $\rho:=\gamma(f,x)\|x-y\|<1$, then:
\begin{center}
\emph{(a)} $\alpha(f,y)\leq \max\{\alpha(f,x),\rho\}$. 
\emph{(b)} $\beta(f,y)\leq \max\{\beta(f,x),\|y-x\|\}$. 
\emph{(c)} $\gamma(f,y)=\gamma(f,x)$.
\end{center}
Moreover, if $\|y-x\|<\beta(f,x)$, all are equalities.
\end{lem}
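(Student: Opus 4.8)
The plan is to reduce everything to the exact Taylor expansion of $f$ and of its derivatives around $x$, evaluated at $y$. Write $h:=y-x$, so that $\|h\|=\|x-y\|$ and $\rho=\gamma(f,x)\|h\|$. Since $f$ is a polynomial and $\bbF$ has characteristic zero, for every $j\ge 0$ one has the \emph{finite} identity
\begin{equation}\label{eq:taylor-deriv}
\frac{1}{j!}\,\diff_y^jf\;=\;\sum_{\ell\ge 0}\binom{j+\ell}{\ell}\,\frac{1}{(j+\ell)!}\,\diff_x^{j+\ell}f\big(\underbrace{\cdot,\dots,\cdot}_{j},\underbrace{h,\dots,h}_{\ell}\big),
\end{equation}
which for $j=0$ is the ordinary Taylor formula for $f(y)$. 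From \eqref{eq:taylor-deriv} I would extract two facts. First, $\bbF$ is non-archimedean, so $|m|\le 1$ for every $m\in\bbZ$ and in particular $|\binom{j+\ell}{\ell}|\le 1$; hence, applying $\diff_xf^{-1}$ to \eqref{eq:taylor-deriv} and invoking the definition of $\gamma(f,x)$, every summand with $j+\ell\ge 2$ has norm at most $\gamma(f,x)^{j+\ell-1}\|h\|^{\ell}=\gamma(f,x)^{j-1}\rho^{\ell}$. Second, the case $j=1$ of \eqref{eq:taylor-deriv} together with this bound gives $\diff_xf^{-1}\diff_yf=\uno+N$ with $\|N\|\le\rho<1$ (this is essentially the content of the proof of Lemma~\ref{lem:inversegamma}); since $\|N\|<1$, both $\diff_xf^{-1}\diff_yf$ and its inverse $\diff_yf^{-1}\diff_xf$ are isometries of $\bbF^n$, so for any vector or multilinear map $A$ with values in $\bbF^n$ one has $\|\diff_yf^{-1}A\|=\|(\diff_yf^{-1}\diff_xf)(\diff_xf^{-1}A)\|=\|\diff_xf^{-1}A\|$.

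For part (c), I would set $j=k\ge 2$ in \eqref{eq:taylor-deriv}. Then every summand of $\diff_xf^{-1}\tfrac{1}{k!}\diff_y^kf$ has norm at most $\gamma(f,x)^{k-1}\rho^{\ell}\le\gamma(f,x)^{k-1}$, so the ultrametric inequality and the isometry fact give $\|\diff_yf^{-1}\tfrac{1}{k!}\diff_y^kf\|=\|\diff_xf^{-1}\tfrac{1}{k!}\diff_y^kf\|\le\gamma(f,x)^{k-1}$; taking $(k-1)$-th roots and the supremum over $k\ge 2$ yields $\gamma(f,y)\le\gamma(f,x)$. Now $\gamma(f,y)\|h\|\le\rho<1$, so the hypothesis of the lemma also holds with $x$ and $y$ exchanged, and re-running the same computation gives $\gamma(f,x)\le\gamma(f,y)$; hence $\gamma(f,y)=\gamma(f,x)$.

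For part (b), I would take $j=0$ in \eqref{eq:taylor-deriv}, apply $\diff_xf^{-1}$, and split off the $\ell=0$ and $\ell=1$ terms to get $\diff_xf^{-1}f(y)=\big(x-\newton_f(x)\big)+h+\sum_{\ell\ge 2}\diff_xf^{-1}\tfrac{1}{\ell!}\diff_x^\ell f(h,\dots,h)$. The first two terms sum to $y-\newton_f(x)$, of norm at most $\max\{\beta(f,x),\|h\|\}$, while each tail summand has norm at most $\gamma(f,x)^{\ell-1}\|h\|^{\ell}=\rho^{\ell-1}\|h\|\le\rho\|h\|<\|h\|$; so by the ultrametric inequality and the isometry fact $\beta(f,y)=\|\diff_xf^{-1}f(y)\|\le\max\{\beta(f,x),\|h\|\}$. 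Part (a) is then immediate from (b) and (c): $\alpha(f,y)=\beta(f,y)\gamma(f,y)=\beta(f,y)\gamma(f,x)\le\max\{\beta(f,x)\gamma(f,x),\|h\|\gamma(f,x)\}=\max\{\alpha(f,x),\rho\}$.

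Finally, for the claim that all three inequalities become equalities when $\|h\|<\beta(f,x)$: in the decomposition above $\|x-\newton_f(x)\|=\beta(f,x)>\|h\|$, so $\|y-\newton_f(x)\|=\beta(f,x)$, and since every tail summand has norm $<\|h\|<\beta(f,x)$ the ultrametric inequality forces $\|\diff_xf^{-1}f(y)\|=\beta(f,x)$, i.e.\ $\beta(f,y)=\beta(f,x)$; combined with (c) this gives $\alpha(f,y)=\alpha(f,x)$, and moreover $\rho=\gamma(f,x)\|h\|<\gamma(f,x)\beta(f,x)=\alpha(f,x)$, so in all three cases the right-hand maximum collapses to the first entry. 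I expect the only genuinely delicate point to be \eqref{eq:taylor-deriv} itself: one must check both that the expansion of each derivative terminates (polynomiality) and that the coefficients have absolute value $\le 1$, which rests on the identity $\binom{j+\ell}{\ell}=\tfrac{(j+\ell)!}{j!\,\ell!}\in\bbZ$; after that the argument is routine bookkeeping with the ultrametric inequality, apart from the small bootstrap in (c) where $\gamma(f,y)\le\gamma(f,x)$ is first used to license exchanging $x$ and $y$.
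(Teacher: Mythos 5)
Your proof is correct and follows essentially the same route as the paper: Taylor-expand the $j$th derivative at $y$ around $x$, bound each summand by $\gamma(f,x)^{j-1}\rho^{\ell}$ via the definition of $\gamma$ and $|\binom{j+\ell}{\ell}|\le 1$, use Lemma~\ref{lem:inversegamma} to transfer $\diff_xf^{-1}$-estimates to $\diff_yf^{-1}$-estimates, prove (c) with a bootstrap by symmetry in $x$ and $y$, prove (b) by isolating the $\ell=0,1$ terms of the $j=0$ expansion, and deduce (a). The only place you deviate slightly is in two spots: you upgrade Lemma~\ref{lem:inversegamma}'s $\|\diff_yf^{-1}\diff_xf\|=1$ to the genuinely stronger fact that $\uno+N$ with $\|N\|<1$ is an ultrametric isometry (so that $\|\diff_yf^{-1}A\|=\|\diff_xf^{-1}A\|$ exactly), and you then use that isometry together with the strict ultrametric equality case to settle the equality claim for (b) directly, whereas the paper instead rederives the inequality $\beta(f,x)\le\max\{\beta(f,y),\|y-x\|\}$ with $x$ and $y$ exchanged and concludes by a two-sided sandwich. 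Both arguments are valid and comparably short; yours makes the isometry explicit (useful elsewhere), while the paper's stays entirely within the already-stated norm-one bound and avoids invoking the equality case of the ultrametric inequality for operators.
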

\begin{lem}[Variations along Newton step]\label{lem:variationNewton}
If $\alpha(f,x)<1$, then:
\begin{center}
\emph{(a)} $\alpha(f,\newton_f(x))\leq \alpha(f,x)^2$.\hfill
\emph{(b)} $\beta(f,\newton_f(x))\leq \alpha(f,x)\beta(f,x)$.\hfill
\emph{(c)} $\gamma(f,\newton_f(x))=\gamma(f,x)$.
\end{center}
In particular, $\newton_f(\newton_f(x))$ is well-defined.
\end{lem}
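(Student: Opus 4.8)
The plan is to derive all three parts from Lemmas~\ref{lem:inversegamma} and~\ref{lem:variationSmaleparameters} together with a single Taylor--expansion estimate. Write $y:=\newton_f(x)$. Since $\|x-y\|=\|x-\newton_f(x)\|=\beta(f,x)$ (as already noted in the text), the quantity $\rho:=\gamma(f,x)\|x-y\|$ equals $\gamma(f,x)\beta(f,x)=\alpha(f,x)$, which is $<1$ by hypothesis. Hence Lemma~\ref{lem:inversegamma} applies: $\diff_y f$ is invertible---so $\beta(f,y)$ and $\gamma(f,y)$ are defined and $\newton_f(\newton_f(x))=\newton_f(y)$ makes sense, settling the last assertion---and moreover $\|\diff_y f^{-1}\diff_x f\|=1$. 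Lemma~\ref{lem:variationSmaleparameters}(c) then gives $\gamma(f,y)=\gamma(f,x)$, which is part~(c). Since $\alpha(f,y)=\beta(f,y)\gamma(f,y)=\beta(f,y)\gamma(f,x)$ and $\alpha(f,x)\beta(f,x)\gamma(f,x)=\alpha(f,x)^2$, part~(a) follows immediately from part~(b). So the entire proof reduces to the bound $\beta(f,y)\le\alpha(f,x)\beta(f,x)$.

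For this I would Taylor--expand the polynomial map $f$ about $x$ and evaluate at $y$; this is a finite sum and is valid because $\bbF$ has characteristic zero, so that
\[
f(y)=f(x)+\diff_x f(y-x)+\sum_{k\ge 2}\tfrac{\diff_x^k f}{k!}(y-x)^{\otimes k}.
\]
By definition of the Newton operator $y-x=-\diff_x f^{-1}f(x)$, so the first two terms cancel and $f(y)=\sum_{k\ge 2}\tfrac{\diff_x^k f}{k!}(y-x)^{\otimes k}$. Applying $\diff_x f^{-1}$, using the ultrametric (strong triangle) inequality on the finite sum, and then submultiplicativity of the induced norm together with $\|y-x\|=\beta(f,x)$ and the definition of $\gamma(f,x)$, I get
\[
\big\|\diff_x f^{-1}f(y)\big\|\le\max_{k\ge 2}\Big\|\diff_x f^{-1}\tfrac{\diff_x^k f}{k!}\Big\|\,\|y-x\|^{k}\le\max_{k\ge 2}\gamma(f,x)^{k-1}\beta(f,x)^{k}=\beta(f,x)\max_{k\ge 2}\alpha(f,x)^{k-1}.
\]
Since $\alpha(f,x)<1$, the maximum over $k\ge 2$ is attained at $k=2$, so $\|\diff_x f^{-1}f(y)\|\le\alpha(f,x)\beta(f,x)$. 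It remains to transfer this to $\beta(f,y)=\|\diff_y f^{-1}f(y)\|$ by routing through $\diff_x f$: writing $\diff_y f^{-1}f(y)=(\diff_y f^{-1}\diff_x f)(\diff_x f^{-1}f(y))$ and using $\|\diff_y f^{-1}\diff_x f\|=1$ from Lemma~\ref{lem:inversegamma} gives $\beta(f,y)\le\|\diff_x f^{-1}f(y)\|\le\alpha(f,x)\beta(f,x)$. This is part~(b), hence also~(a), and the well-definedness of the second Newton step has already been recorded.

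The computation is short, so there is no single hard step; the one place deserving care is the Taylor estimate, which is also exactly where the ultrametric hypothesis pays off. Over $\bbR$ or $\bbC$ one must sum a geometric--type series and absorb the resulting constant, whereas here the strong triangle inequality replaces $\sum_{k\ge 2}$ by a single maximum, and the condition $\alpha(f,x)<1$ pins that maximum to the $k=2$ term with no loss---this is what yields the clean equalities and the sharp quadratic rate. The remaining points are pure bookkeeping: checking that the norm identities $\diff_x f^{-1}\big(\tfrac{\diff_x^k f}{k!}(y-x)^{\otimes k}\big)=\big(\diff_x f^{-1}\tfrac{\diff_x^k f}{k!}\big)(y-x)^{\otimes k}$ and $\diff_y f^{-1}f(y)=(\diff_y f^{-1}\diff_x f)(\diff_x f^{-1}f(y))$ hold for the induced norm on multilinear maps, and that Taylor's formula is being applied to a genuine polynomial map, so there are no convergence subtleties.
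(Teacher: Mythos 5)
Your proof is correct and follows essentially the same route as the paper's: deduce (c) from Lemma~\ref{lem:variationSmaleparameters}(c), note (a) follows from (b) and (c), and obtain (b) by Taylor-expanding $\diff_x f^{-1}f(y)$ at $y=\newton_f(x)$ so the constant and linear terms cancel, then bound the tail by $\gamma(f,x)\beta(f,x)^2=\alpha(f,x)\beta(f,x)$ and transfer to $\beta(f,y)$ via $\|\diff_y f^{-1}\diff_x f\|=1$ from Lemma~\ref{lem:inversegamma}. The only cosmetic difference is that you re-derive this Taylor estimate from scratch, whereas the paper specializes the already-proved inequality~\eqref{eq:ineqmixedbeta1} to $y=\newton_f(x)$.
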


\begin{proof}[Proof of Theorem~\ref{thm:Smalealphatheory}]
If $\alpha(f,x)<1$, then, using induction and Lemma~\ref{lem:variationNewton}, we obtain that (a), (b) and (c) hold. But then the sequence $\{\newton_f^k(x)\}$ converges since
$\lim_{k\to\infty}\|\newton_f^{k+1}(x)-\newton_f^k(x)\|=0$
and so it is a Cauchy sequence. Finally, (Q) follows from noting that for $l\geq k$
\[
\|\newton_f^l(x)-\newton_f^k(x)\|\leq \alpha(f,x)^{2^{l-k}}\beta(f,\newton_f^k(x))
\]
and taking infinite sum together with the equality case of the ultrametric inequality. In particular, we have $\dist(x,f^{-1}(0))=\|x-\zeta\|=\beta(f,x)<1/\gamma(f,x)$. This shows that ($\alpha$) implies ($\gamma$).

For the other direction, assume that $\dist(x,f^{-1}(0))<1/\gamma(f,x)$. Then $\gamma(f,x)$ is finite, since otherwise $\dist(x,f^{-1}(0))<0$, which is impossible. Let $\zeta\in\bbF^n$ be a zero of $f$ such that $\dist(x,\zeta)<1/\gamma(f,x)$. Then
$
0=f(\zeta)=f(x)+\diff_xf(\zeta-x)+\sum_{k=2}^{\infty}\frac{\diff_x^kf}{k!}(\zeta-x,\ldots,\zeta-x),
$
and so
\[
-\diff_xf^{-1}f(x)=\zeta-x+\sum_{k=2}^{\infty}\diff_xf^{-1}\frac{\diff_x^kf}{k!}(\zeta-x,\ldots,\zeta-x).
\]
Now, the higher order terms satisfy that
$
\left\|\diff_xf^{-1}\frac{\diff_x^kf}{k!}(\zeta-x,\ldots,\zeta-x)\right\|\leq \left(\gamma(f,x)\|\zeta-x\|\right)^{k-1}\|\zeta-x\|<\|\zeta-z\|
$
and so, by the equality case of the ultrametric inequality,
$
\beta(f,x)=\|\zeta-x\|<1/\gamma(f,x),
$
as desired.
\end{proof}

Now, we prove the auxiliary lemmas \ref{lem:inversegamma}, ~\ref{lem:variationSmaleparameters} and~\ref{lem:variationNewton}

\begin{proof}[Proof of Lemma~\ref{lem:inversegamma}]
We have that
$
\diff_xf^{-1}\diff_yf=\bbI+\sum_{k=1}^{\infty}\diff_xf^{-1}\frac{\diff_x^{k+1}f(y-x,\ldots,y-x)}{k!}.
$
Now, under the given assumption,
$
\left\|\diff_xf^{-1}\frac{\diff_x^{k+1}f(y-x,\ldots,y-x)}{k!}\right\|\leq \left(\gamma(f,x)\|y-x\|\right)^{k-1}<1
$
for $k\geq 2$, and so, by the the ultrametric inequality, $\|\diff_xf^{-1}\diff_yf-\bbI\|<1$. Therefore 
$
\sum_{k=0}^\infty (\bbI-\diff_xf^{-1}\diff_yf)^k
$
converges, and it does so to the inverse of $\diff_xf^{-1}\diff_yf$. Since, by assumption $\diff_xf$ is invertible, so it is $\diff_yf$. 

Finally, by the invertibility of $\diff_yf$, we have that
$
\diff_yf^{-1}\diff_xf=\sum_{k=0}^\infty (\bbI-\diff_xf^{-1}\diff_yf)^k,
$
and so, by the equality case of the ultrametric inequality, $\|\diff_yf^{-1}\diff_xf\|=1$, as desired.
\end{proof}

\begin{proof}[Proof of Lemma~\ref{lem:variationSmaleparameters}]
We first prove (c) and then (b). (a) follows from (b) and (c) immediately.

(c) We note that under the given assumption, for $k\geq 2$,
\begin{equation}\label{eq:ineqmixedgamma1}
    \left\|\diff_xf^{-1}\frac{\diff_y^kf}{k!}\right\|\leq\gamma(f,x)^{k-1}.
\end{equation}
For this, we expand the Taylor series of $\frac{\diff_y^kf}{k!}$ (with respect $y$) and note that its $l$th term is dominated by
\[
\gamma(f,x)^{k+l-1}\|y-x\|^l,
\]
which, by the ultrametric inequality, gives the above inequality. In this way, for $k\geq 2,$
\[
\left\|\diff_yf^{-1}\frac{\diff_y^kf}{k!}\right\|\leq \left\|\diff_yf^{-1}\diff_xf\right\|\left\|\diff_xf^{-1}\frac{\diff_y^kf}{k!}\right\|\leq \gamma(f,x)^{k-1}
\]
by Lemma~\ref{lem:inversegamma} and~\eqref{eq:ineqmixedgamma1}. Thus $\gamma(f,y)\leq \gamma(f,x)$. Now, due to this, the hypothesis $\gamma(f,y)\|x-y\|<1$ holds, and so, by the same argument, $\gamma(f,x)\leq\gamma(f,y)$, which is the desired equality.

(b) Arguing as in (c), we can show that
\begin{equation}\label{eq:ineqmixedbeta1}
    \left\|\diff_xf^{-1}f(y)\right\|\leq \max\{\|\diff_xf^{-1}f(x)+y-x\|,\gamma(f,x)\|y-x\|^2\}
\end{equation}
by noting that the general term (of the Taylor series of $\diff_xf^{-1}f(y)$ with respect $y$) is dominated by $\gamma(f,x)^{k-1}\|y-x\|^k<\gamma(f,x)\|y-x\|^2$. Now, $\beta(f,y)\leq \left\|\diff_yf^{-1}\diff_xf\right\|\left\|\diff_xf^{-1}f(y)\right\|$, and so, by Lemma~\ref{lem:inversegamma} and~\eqref{eq:ineqmixedbeta1},
\[
    \beta(f,y)
    \leq \max\{\|\diff_xf^{-1}f(x)+y-x\|,\gamma(f,x)\|y-x\|^2\}\leq \max\{\beta(f,x),\|y-x\|\}.
\]
For the equality case, note that, by the same argument, we have
$
\beta(f,x)\leq\max\{\beta(f,y),\|y-x\|\}=\beta(f,y)
$
where the equality on the right-hand side follows from $\beta(f,x)>\|y-x\|$.
\end{proof}
\begin{proof}[Proof of Lemma~\ref{lem:variationNewton}]
(a) follows from combining (b) and (c), and (c) from Lemma~\ref{lem:variationSmaleparameters} (c). We only need to show (b). We use equation~\eqref{eq:ineqmixedbeta1} in the proof of Lemma~\ref{lem:variationSmaleparameters} with $y=\newton_f(x)$. By~\eqref{eq:ineqmixedbeta1} and Lemma~\ref{lem:inversegamma},
\[
\beta(f,\newton_f(x))\leq \max\{\|D_xf^{-1}f(x)+N_f(x)-x\|,\gamma(f,x)\|\newton_f(x)-x\|^2\}.
\]
Now, $\newton_f(x)-x=-\diff_xf^{-1}f(x)$, so the above becomes
$
\beta(f,\newton_f(x))\leq \max\{0,\gamma(f,x)\beta(f,x)^2\},
$
which gives the desired claim.
\end{proof}

\section*{Acknowledgements}

J.G.S. is supported by a Companion Species Research Fellowship funded by the Durlacher Foundation. J.T.-C. is supported by a postdoctoral fellowship of the 2020 ``Interaction'' program of the Fondation Sciences Mathématiques de Paris, and partially supported by ANR JCJC GALOP (ANR-17-CE40-0009).

J.G.S. and J.T.-C. are thankful to Elias Tsigaridas for useful suggestions, and to Evgenia Lagoda for moral support.

\bibliographystyle{plain}

\bibliography{biblio}

\end{document}